\theoremstyle{plain}
\newtheorem{thm}{Theorem}
\newtheorem{lem}[thm]{Lemma}
\newtheorem{prop}[thm]{Proposition}
\newtheorem{cor}[thm]{Corollary}
\theoremstyle{definition}
\newtheorem{defn}[thm]{Definition}
\newtheorem*{CH}{Continuum Hypothesis}
\newtheorem{exmp}{Example}
\theoremstyle{remark}
\newtheorem{rem}{Remark}
\renewcommand{\O}{\emptyset}
\newcommand{\R}{\mathbb{R}}
\title{Counter-example to continuity of measure in uncountable unions}
\author[S.~Bilkhu]{Simranjeet Bilkhu$^1$}
\author[N.~Forman]{Noah Mills Forman$^1$}
\address{$^1$ Department of Mathematics \& Statistics\\ McMaster University\\ Hamilton, Ontario, L8S 4K1 \\ Canada}
\date{\today}
\thanks{This research is partially supported by NSERC grant RGPIN-2020-06907}
\keywords{Continuity of measure, continuity of probability, ordinals}
\subjclass[2020]{Primary 60A05, 28-02; Secondary 03-01}
\begin{document}

\begin{abstract}
    Continuity of measure asserts that the measure of the union of an increasing sequence of sets is equal to the supremum of the measures of those sets. We provide counter examples in the case of uncountable unions. We construct the first counter example on the ordinal numbers, and we show that counterexamples also exist in $\R$ if we assume the continuum hypothesis. 
\end{abstract}

\maketitle

\section{Introduction}

\newcommand{\cF}{\mathcal{F}}
\newcommand{\cO}{\mathcal{O}}
\renewcommand{\Pr}{\mathbf{P}}
\newcommand{\Leb}{\textnormal{Leb}}

\emph{Continuity of measure} is the property that, in a measure space $(S,\mathcal{F},\mu)$, given a sequence $(A_n,\,n\ge1)$ of $\cF$-measurable sets with $A_n\subseteq A_{n+1}$ for all $n$,
\begin{equation*}
 \mu\left(\bigcup_{n=1}^\infty A_n\right) = \sup_n \mu(A_n) = \lim_{n\to\infty}\mu(A_n), \tag{CM}\label{eq:CM}
\end{equation*}
and correspondingly for the intersection of a decreasing sequence of sets of finite measure \cite[Lemma 1.15]{Kallenberg}. This is an easy consequence of countable additivity.

Continuity of measure has numerous applications throughout the foundations of probability theory. For example, we can use it to show that for any real-valued random variable $X$, the cumulative distribution function $F_X(n) = \Pr\{X\le n\}$ converges to 1 as $n$ increases and to 0 as $n$ decreases.

In this article, we show that this principle does not, in general, extend to uncountable unions or intersections. To motivate this, consider the following flawed proof that continuity of measure \emph{should} apply to uncountable unions.

\begin{defn}
 Given a measure space $(S,\mathcal{F},\mu)$ and a totally ordered index set $(\mathcal{O},\preceq)$, an \emph{increasing $\cO$-family of $\cF$-sets} is a collection $(A_t)_{t\in \mathcal{O}}$ of $\cF$-measurable sets, with the property that $A_s\subseteq A_t$ whenever $s\preceq t$.
\end{defn}

\begin{proof}[Bogus proof] Let $(\cO,\preceq)$ and $(A_t)_{t\in \mathcal{O}}$ be as above. Let $t_1,t_2,\ldots$ denote any unbounded, increasing sequence in $\cO$. Then because this sequence is unbounded, for every $t\in\cO$ there exists some $n$ such that $t\preceq t_n$, so $A_t\subseteq A_{t_n}$ and $\mu(A_t) \le \mu(A_{t_n})$. From this, we see
\begin{align}
 \bigcup_{t\in\cO} A_t &= \bigcup_{n=1}^\infty A_{t_n} \quad \text{and}\label{eq:bad_pf:union}\\
 \sup_{t\in\cO}\mu(A_t) &= \sup_{n\ge1}\mu(A_{t_n}).\label{eq:bad_pf:sup}
\end{align}
We conclude that
\begin{equation*}
 \mu\left(\bigcup_{t\in\cO} A_t\right) = \mu\left(\bigcup_{n=1}^\infty A_{t_n}\right)
 	= \sup_{n\ge1}\mu(A_{t_n})
 	= \sup_{t\in\cO}\mu(A_t),
\end{equation*}
by \eqref{eq:bad_pf:union}, \eqref{eq:CM}, and \eqref{eq:bad_pf:sup}, respectively.
\end{proof}

If the index set $(\cO,\preceq)$ is $(\R,\leq)$, or any subset of $\R$ under the standard ordering, then the above proof is valid. However, we will show that in general, the proof fails on the grounds that \emph{there may not exist an unbounded, increasing sequence in $\cO$}.

\begin{thm}
\label{thm:general}
 There exists a probability space $(\Omega,\cF,\Pr)$, a totally ordered set $(O,\preceq)$, and an increasing $\cO$-family of $\cF$-sets $(A_t)_{t\in\cO}$ such that
 \begin{equation}\label{eq:general}
  \Pr\!\left(\bigcup\nolimits_{t\in\mathcal{O}} A_t\right) > \sup_{t\in \cO}\Pr(A_t).
 \end{equation}
\end{thm}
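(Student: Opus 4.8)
The plan is to take the totally ordered set $(\cO,\preceq)$ to be the first uncountable ordinal $\omega_1$ under the usual ordering of ordinals. This is precisely the setting in which the bogus proof collapses: since the supremum of any countable set of countable ordinals is again a countable ordinal, every countable subset of $\omega_1$ is bounded above, and hence there is no unbounded increasing sequence $t_1,t_2,\ldots$ of order type $\omega$. Moreover, each $t\in\omega_1$ is itself a countable ordinal, so its initial segment $[0,t]:=\{s\in\omega_1: s\preceq t\}$ is a countable set --- this is the feature I will exploit.

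For the underlying space I would take $\Omega=\omega_1$ and let $\cF$ be the collection of all subsets of $\omega_1$ that are either countable or co-countable (have countable complement). A routine verification shows $\cF$ is a $\sigma$-algebra: it contains $\O$ and $\Omega$, it is closed under complementation by the symmetry of the definition, and a countable union of its members is countable when all members are countable and co-countable otherwise. I would then define $\Pr(A)=0$ for countable $A$ and $\Pr(A)=1$ for co-countable $A$.

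The step demanding the most care is checking that $\Pr$ is countably additive, where the key observation is that two co-countable subsets of $\omega_1$ cannot be disjoint: if $A^c$ and $B^c$ are both countable then $(A\cap B)^c=A^c\cup B^c$ is countable, so $A\cap B$ is co-countable and in particular nonempty. Hence in any countable disjoint family at most one set is co-countable, and additivity then follows by splitting into the case where every member is countable (the union is countable and both sides vanish) and the case where exactly one member is co-countable (the union is co-countable and both sides equal $1$).

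Finally I would set $A_t:=[0,t]$ for $t\in\omega_1$. These sets increase in $t$, and each is countable, so $\Pr(A_t)=0$ and therefore $\sup_{t\in\cO}\Pr(A_t)=0$. On the other hand, every element of $\omega_1$ belongs to some $A_t$ (take $t$ equal to that element), so $\bigcup_{t\in\cO}A_t=\Omega$ and $\Pr(\bigcup_{t\in\cO}A_t)=1$, which gives $1>0$ and establishes \eqref{eq:general}.
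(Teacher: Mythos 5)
Your proposal is correct and follows essentially the same route as the paper: the sample space $\omega_1$ with the countable/co-countable $\sigma$-algebra, the $0$--$1$ measure, and countable initial segments as the increasing family (the paper uses the open segments $A_\xi=\xi$ where you use $[0,\xi]=\xi+1$, an immaterial difference). Your explicit verification that $\cF$ is a $\sigma$-algebra and that $\Pr$ is countably additive fills in details the paper leaves to the reader.
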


To make this more concrete, we can translate our result to the setting of Lebesgue measure, $((0,1),\mathcal{B}((0,1)),\Leb)$, with the help of the following.

\begin{CH}
 If $S$ is an uncountably infinite set then there exists an injection $g\colon \R\to S$. Equivalently, the cardinality of $\R$ is the second-smallest infinite cardinality, denoted
 \begin{equation*}
  |\R| = \aleph_1. \tag{\rm CH}\label{eq:CH}
 \end{equation*}
\end{CH}

The Continuum Hypothesis is famously independent of the standard ZFC axioms of set theory, meaning it can neither be proved true nor false from those axioms \cite{Cohen63}. Intriguingly, there remains ongoing mathematical and philosophical research into whether \eqref{eq:CH} or its negation \emph{should} be adopted as an additional axiom of set theory \cite{CH}.

\begin{thm}
\label{thm:real}
 Assuming \eqref{eq:CH}, there exists a total ordering $\preceq$ on $\cO := (0,1)$, and an increasing $\cO$-family $(A_t)_{t\in\cO}$ of Borel subsets of $(0,1)$, such that
 \begin{equation}\label{eq:real}
  \Leb\!\left(\bigcup\nolimits_{t\in (0,1)} A_t\right) > \sup_{t\in (0,1)}\Leb(A_t).
 \end{equation}
\end{thm}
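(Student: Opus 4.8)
The plan is to transport the phenomenon from Theorem~\ref{thm:general} onto $(0,1)$ by exploiting the fact that Lebesgue measure already annihilates countable sets, so that no exotic measure needs to be constructed. Concretely, I would invoke \eqref{eq:CH} to fix a bijection between $(0,1)$ and the first uncountable ordinal $\omega_1$, and pull back the canonical well-ordering of $\omega_1$ to obtain the total order $\preceq$ on $\cO = (0,1)$. The resulting ordered set $((0,1),\preceq)$ then has order type $\omega_1$, which gives it the two features that drive the counterexample: it has no $\preceq$-maximal element, and --- crucially --- every proper $\preceq$-initial segment is countable.

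Next I would define the increasing family by $A_t := \{s \in (0,1) : s \preceq t\}$. Each $A_t$ is a $\preceq$-initial segment corresponding to $[0,\alpha]$ for some countable ordinal $\alpha < \omega_1$, hence $A_t$ is a countable subset of $(0,1)$. Countable sets are Borel, being countable unions of singletons, and Lebesgue-null, so $\Leb(A_t) = 0$ for every $t$, giving $\sup_{t}\Leb(A_t) = 0$. The family is increasing because $s \preceq t$ implies $A_s \subseteq A_t$ directly from transitivity of $\preceq$. For the left-hand side of \eqref{eq:real}, note that $t \in A_t$ for every $t$, so the union $\bigcup_{t\in(0,1)} A_t$ is all of $(0,1)$, whose Lebesgue measure is $1$. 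Combining, $\Leb\bigl(\bigcup_{t\in(0,1)} A_t\bigr) = 1 > 0 = \sup_{t}\Leb(A_t)$, which is exactly \eqref{eq:real}.

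The main obstacle --- and the only place where set-theoretic care is needed --- is establishing that every initial segment $A_t$ is countable. This is precisely where \eqref{eq:CH} enters: the property that a well-ordering of a set has all proper initial segments countable is equivalent to that set having cardinality $\aleph_1$, and it is \eqref{eq:CH} that upgrades $|(0,1)|$ to $\aleph_1$ and thereby licenses an order isomorphism with $\omega_1$. Without it, a well-ordering of $(0,1)$ could have order type exceeding $\omega_1$, with uncountable initial segments, destroying the nullity of the $A_t$. Everything else is routine: the measurability and nullity of countable sets, the monotonicity of the family, and the computation of the union are all immediate once the order $\preceq$ of type $\omega_1$ is in hand.

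It is worth remarking that this argument is in a sense cleaner than that of Theorem~\ref{thm:general}, since the required ``every initial segment is null'' behaviour is supplied for free by standard Lebesgue measure rather than by an ad hoc countable/co-countable measure.
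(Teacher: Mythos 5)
Your proposal is correct and follows essentially the same route as the paper: use \eqref{eq:CH} to pull back the well-ordering of $\omega_1$ to $(0,1)$ and take the $A_t$ to be initial segments, which are countable hence Borel and null, while their union is all of $(0,1)$. The only (cosmetic) difference is that you use closed segments $\{s : s\preceq t\}$ where the paper uses strict ones, which lets you conclude $\bigcup_t A_t = (0,1)$ immediately from $t\in A_t$ instead of appealing to Proposition~\ref{prop:omega1-not-successor}.
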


In fact, we can strengthen both results.

\begin{cor}\label{cor:set_masses:general}
 Fix a totally ordered index set $(\cO,\preceq)$ that realizes the claim of Theorem \ref{thm:general}, in particular with the right-hand side of \eqref{eq:general} equalling 0. Let $(m_t)_{t\in\cO}$ denote a non-decreasing family of non-negative real numbers, and fix $M \ge \sup_{t\in\cO} m_t$. Then there exists a measure space $(\Omega,\cF,\mu)$ and an increasing $\cO$-family of $\cF$-sets $(B_t)_{t\in\cO}$ such that:
 \begin{equation}\label{eq:set_masses:general}
  \forall t\in \cO,\ \mu(B_t) = m_t,\quad \text{and}\quad 
  \mu\!\left(\bigcup\nolimits_{t\in \cO} B_t\right) = M.
 \end{equation}
\end{cor}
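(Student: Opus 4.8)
The plan is to realize the desired space as a direct sum (disjoint union) of two measure spaces: one that realizes the prescribed masses $(m_t)_{t\in\cO}$ through an increasing family for which continuity of measure \emph{does} hold, contributing $\sup_{t\in\cO}m_t$ to the measure of the union, and a second that supplies the remaining excess $M-\sup_{t\in\cO}m_t$ by rescaling the counterexample family $(A_t)_{t\in\cO}$ from Theorem \ref{thm:general}. The crucial point is that, because the right-hand side of \eqref{eq:general} equals $0$, we have $\Pr(A_t)=0$ for every $t$, so the second component can inject mass into the union without disturbing the mass of any individual $B_t$.

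For the first component I would work on $([0,\infty),\mathcal{B}([0,\infty)),\lambda)$ with $\lambda$ Lebesgue measure and set $C_t:=[0,m_t)$. Since $(m_t)$ is non-decreasing, $s\preceq t$ forces $m_s\le m_t$ and hence $C_s\subseteq C_t$, so $(C_t)$ is an increasing $\cO$-family; moreover $\lambda(C_t)=m_t$ and $\bigcup_{t\in\cO}C_t=[0,\sup_t m_t)$, whose measure is exactly $\sup_{t\in\cO}m_t$. Thus this family alone already realizes the masses, but with the ``expected'' union measure $\sup_t m_t$.

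For the second component, write $c:=\Pr(\bigcup_{t\in\cO}A_t)$, which is strictly positive by \eqref{eq:general}, and rescale by putting the measure $\alpha\Pr$ on $(\Omega,\cF)$ with $\alpha:=(M-\sup_t m_t)/c$; this $\alpha$ is well defined and non-negative whenever $\sup_t m_t<\infty$, since $M\ge\sup_t m_t$. Each $A_t$ then has measure $\alpha\cdot 0=0$, while $\bigcup_t A_t$ has measure $\alpha c=M-\sup_t m_t$. I would then take $\mu$ to be the direct-sum measure on $\Omega':=\Omega\sqcup[0,\infty)$ given by $\mu(E\sqcup F)=\alpha\Pr(E)+\lambda(F)$ for $E\in\cF$ and $F\in\mathcal{B}([0,\infty))$, and set $B_t:=A_t\sqcup C_t$. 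Monotonicity of $(B_t)$ is immediate from that of $(A_t)$ and $(C_t)$, each $B_t$ is measurable, and the union $\bigcup_t B_t=(\bigcup_t A_t)\sqcup[0,\sup_t m_t)$ is measurable as well, being a disjoint union of a set measurable by Theorem \ref{thm:general} and an interval. One then computes
\begin{equation*}
 \mu(B_t)=\alpha\Pr(A_t)+\lambda(C_t)=0+m_t=m_t,\qquad \mu\!\left(\bigcup\nolimits_{t\in\cO}B_t\right)=\alpha c+\sup_{t}m_t=M.
\end{equation*}

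The only genuine obstacle is the edge case of infinite total mass. If $\sup_t m_t=\infty$ then necessarily $M=\infty=\sup_t m_t$ and the family $(C_t)$ alone already satisfies \eqref{eq:set_masses:general}. If instead $\sup_t m_t<\infty$ but $M=\infty$, the finite scalar $\alpha$ no longer suffices, and I would replace the single rescaled copy of $(\Omega,\cF,\Pr)$ by a countable disjoint union of copies $(\Omega_n,\cF_n,\Pr_n)$, each carrying its own family $(A_t^{(n)})$. Setting $D_t:=\bigsqcup_{n\ge1}A_t^{(n)}$, each $D_t$ is still null while $\mu(\bigcup_t D_t)=\sum_{n}c=\infty$ because $c>0$, so substituting $(D_t)$ for the second component again yields $\mu(\bigcup_t B_t)=M$ with every individual mass $\mu(B_t)=m_t$ unchanged.
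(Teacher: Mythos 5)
Your proposal is correct and follows essentially the same route as the paper: a disjoint-union measure space combining a rescaled copy of the counterexample from Theorem \ref{thm:general} (which injects the excess mass $M-\sup_{t\in\cO}m_t$ into the union while leaving each $B_t$ untouched, since every $A_t$ is null) with a half-line carrying intervals of length $m_t$ that realize the prescribed masses. The only difference is that you also handle the edge cases $\sup_{t\in\cO}m_t=\infty$ and $M=\infty$, which the paper's proof sidesteps by assuming the family $(m_t)_{t\in\cO}$ is bounded.
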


\begin{cor}\label{cor:set_masses:real}
 In the setting of Theorem \ref{thm:real}, let $(m_t)_{t\in\cO}$ denote a non-decreasing family of non-negative real numbers, and fix $M \ge \sup_{t\in\cO} m_t$. Then there exists an increasing $\cO$-family $(B_t)_{t\in\cO}$ Borel subsets of $(0,M)$ such that:
 \begin{equation}\label{eq:set_masses:real}
  \forall t\in (0,1),\ \Leb(B_t) = m_t,\quad \text{and}\quad 
  \Leb\!\left(\bigcup\nolimits_{t\in (0,1)} B_t\right) = M.
 \end{equation}
\end{cor}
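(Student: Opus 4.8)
The plan is to split the target interval $(0,M)$ into two complementary regions. Writing $L:=\sup_{t\in\cO}m_t\le M$, I would use the lower region $(0,L)$ to realize the prescribed masses by ordinary nested intervals, and reserve the upper region $(L,M)$ to absorb the ``anomalous'' extra mass $M-L$ by transplanting a rescaled copy of the counterexample from Theorem \ref{thm:real}. The first step is to extract from the proof of Theorem \ref{thm:real} a family $(A_t)_{t\in\cO}$ of Borel subsets of $(0,1)$, increasing for $\preceq$, with $\Leb(A_t)=0$ for every $t$ and $\bigcup_{t\in\cO}A_t=(0,1)$; the initial-segment construction underlying Theorem \ref{thm:real} already has this sharp form, so that $\sup_t\Leb(A_t)=0$ while the union is conull.

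Assuming $M>L$, let $\phi\colon(0,1)\to(L,M)$ be the increasing affine bijection $\phi(x)=L+(M-L)x$, which scales Lebesgue measure by the factor $M-L$. I then define
\[
 B_t:=(0,m_t)\cup\phi(A_t),\qquad t\in\cO.
\]
Each $B_t$ is a Borel subset of $(0,M)$, and the family is increasing, since $s\preceq t$ yields $(0,m_s)\subseteq(0,m_t)$ (as $(m_t)$ is non-decreasing) together with $\phi(A_s)\subseteq\phi(A_t)$. The verification of \eqref{eq:set_masses:real} is then two applications of additivity. Because every point of $(0,m_t)$ lies below $L$ while $\phi(A_t)\subseteq(L,M)$, the two pieces are disjoint, so
\[
 \Leb(B_t)=\Leb\big((0,m_t)\big)+\Leb\big(\phi(A_t)\big)=m_t+(M-L)\cdot0=m_t.
\]
For the union, $\bigcup_t(0,m_t)=(0,L)$ and $\bigcup_t\phi(A_t)=\phi\big(\bigcup_t A_t\big)=\phi\big((0,1)\big)=(L,M)$, so $\bigcup_t B_t=(0,M)\setminus\{L\}$ has Lebesgue measure $M$. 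The boundary case $M=L$ is disposed of by dropping the upper region and taking $B_t:=(0,m_t)$, whose union is $(0,L)=(0,M)$, again of measure $M$.

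The main obstacle is the conceptual one handled in the first step: one must invoke Theorem \ref{thm:real} in the strong form where each $A_t$ is Lebesgue-null yet the union exhausts $(0,1)$, so that after rescaling the anomaly contributes the full missing mass $M-L$ to the union while contributing nothing to any single $\Leb(B_t)$. Once the counterexample is placed in a region disjoint from the nested intervals carrying the masses $m_t$, the prescribed values are preserved exactly by finite additivity, and no estimates beyond this bookkeeping are required.
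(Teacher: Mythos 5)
Your proposal is correct and follows essentially the same strategy as the paper: both decompose $(0,M)$ into a region carrying nested intervals of length $m_t$ and a disjoint region carrying an affinely rescaled copy of the null-but-conull family $(A_t)$ from Theorem \ref{thm:real}, then conclude by additivity. The only differences are cosmetic (the paper places the rescaled counterexample in the lower region $(0,s)$ and the intervals in $(s,m_t+s)$, whereas you swap the two), and you additionally handle the degenerate case $M=\sup_t m_t$ explicitly, which the paper leaves implicit.
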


In Section \ref{sec:bkg}, we will introduce the ordinal numbers, which we then use in Section \ref{sec:results} to prove our claims.

\section{Background}
\label{sec:bkg}

For our audience of probabilists, we introduce the ordinals here, referring the reader to \cite{jech2003set} for proofs of these assertions and further discussion.

\begin{defn}[Well Ordering] A \textit{well order} $\prec$ on a set $A$ is a total ordering in which each subset of $A$ contains a least element:
\begin{equation}
 \forall S\subseteq A,\ \exists x\in S\text{ s.t.\ }\forall y\in S,\ x\preceq y.
\end{equation}
\end{defn}

\begin{exmp}
 $\left( [0, 1], < \right) $ is not a well ordering, since $(\frac{1}{3}, \frac{1}{2}) \subseteq [0, 1]$ does not contain a least element.
\end{exmp}

\begin{defn}[transitive set; ordinal number]
\label{def:ordinal}
 A set of sets $S$ is \textit{transitive} if every element of $S$ is a subset of $S$. A set of sets is an \textit{ordinal number} if it is transitive and is well-ordered by the membership relation  $\in$.
\end{defn}

\begin{exmp}
\label{eg:ordinals}
    The following sets are all ordinals. In the set-theoretic foundations of mathematics, it is conventional to formally define the non-negative integers as these ordinals, as follows: 
    \begin{itemize}
        \item $0 := \O$,
        \item $1 := \left\{ \O \right\} = \{0\} $,
        \item $2 := \left\{ \O,  \left\{ \O \right\} \right\} = \{0,1\} $,
        \item $3 := \left\{ \O, \left\{ \O \right\}, \left\{ \O, \left\{ \O \right\}  \right\} \right\}  = \{0,1,2\}$.
    \end{itemize}
    We denote the set of all finite ordinals by
    \begin{equation}
     \omega_0 := \big\{ \O, \{\O\}, \big\{\O,\{\O\}\big\}, \ldots \big\} = \{0,1,2,\ldots\} = \mathbb{N}.
    \end{equation}
    This is the least infinite ordinal.
\end{exmp}

It is not obvious from Definition \ref{def:ordinal} that the sets in Example \ref{eg:ordinals} are the unique ordinals of cardinality 0, 1, 2, and 3, but this is indeed the case. See \cite[Lemma 2.11]{jech2003set}.

\begin{rem}
 Order-preserving bijections are the ``isomorphisms'' for the well-ordered sets. The ordinal numbers are a canonical choice of representatives \cite[Thm 2.12]{jech2003set} of the resulting equivalence classes, called ``order types.''
\end{rem}


Because the ordinals are transitive sets, for ordinals $\alpha$ and $\beta$,
$$\alpha \in\beta\text{ if and only if }\alpha\subset\beta.$$

\begin{defn}
Let $\alpha$ and $\beta$ be ordinals. We say that
\begin{center}
    $\alpha \prec \beta$ if and only if $\alpha \in \beta$, or equivalently, if and only if $\alpha \subset \beta$. 
\end{center}
\end{defn}









\newcommand{\Ord}{\textit{Ord}}

We denote the class of all ordinals by $\Ord$. 

\begin{lem}[Lemma 2.11 and the subsequent remarks in \cite{jech2003set}] \label{lem:members-are-ordinals}
 For each $\alpha\in\Ord$,
 \begin{equation}
 \label{eq:members-are-ordinals}
  \alpha = \{\beta\in\Ord\colon \beta\prec\alpha\}.
 \end{equation}
 Moreover, $\prec$ is a well-ordering on $\Ord$.
\end{lem}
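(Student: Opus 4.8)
The plan is to derive everything from the two defining properties of an ordinal — transitivity and being well-ordered by $\in$ — together with the Axiom of Foundation to rule out $\alpha\in\alpha$. For \eqref{eq:members-are-ordinals}, the only real content is that \emph{every element of an ordinal is itself an ordinal}; granting this, the displayed equality is immediate, since for an ordinal $\alpha$ the condition $\beta\in\alpha$ with $\beta$ an ordinal is precisely $\beta\prec\alpha$, while conversely $\beta\prec\alpha$ unpacks to $\beta\in\alpha\subseteq\alpha$. So I would fix an ordinal $\alpha$ and $\beta\in\alpha$. Transitivity of $\alpha$ gives $\beta\subseteq\alpha$, and since the restriction of a well-order to a subset is again a well-order, $\beta$ is well-ordered by $\in$. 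To see $\beta$ is transitive, take $\gamma\in\beta$ and $\delta\in\gamma$; applying transitivity of $\alpha$ twice places $\delta,\gamma,\beta$ all inside $\alpha$, and since $\in$ is a strict (hence transitive) total order on $\alpha$, from $\delta\in\gamma$ and $\gamma\in\beta$ we conclude $\delta\in\beta$. Thus $\gamma\subseteq\beta$ and $\beta$ is an ordinal.

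For the well-ordering claim I would first isolate the key sub-lemma: \emph{for ordinals, $\delta\subsetneq\epsilon$ implies $\delta\in\epsilon$}. To prove it, let $\mu$ be the $\in$-least element of the nonempty set $\epsilon\setminus\delta$, which exists because $\epsilon\setminus\delta\subseteq\epsilon$. One checks $\mu=\delta$ by double inclusion. If $\xi\in\mu$ then $\xi\in\epsilon$ by transitivity of $\epsilon$, and minimality of $\mu$ forbids $\xi\in\epsilon\setminus\delta$, forcing $\xi\in\delta$, so $\mu\subseteq\delta$. Conversely, if $\xi\in\delta\subseteq\epsilon$, then trichotomy of $\in$ on $\epsilon$ together with transitivity of $\delta$ rules out both $\xi=\mu$ and $\mu\in\xi$ (the latter would drag $\mu$ into $\delta$), leaving $\xi\in\mu$, so $\delta\subseteq\mu$. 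Hence $\delta=\mu\in\epsilon$.

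With the sub-lemma in hand, I would show $\prec$ is a strict linear order on $\Ord$. Irreflexivity is $\alpha\notin\alpha$ (Foundation), and transitivity follows since $\alpha\in\beta\in\gamma$ with $\gamma$ transitive yields $\alpha\in\gamma$. The one substantial step, and the main obstacle, is \textbf{trichotomy}: given ordinals $\alpha,\beta$, set $\gamma:=\alpha\cap\beta$, which is transitive and a subset of $\alpha$, hence an ordinal with $\gamma\subseteq\alpha$ and $\gamma\subseteq\beta$. If both inclusions were proper, the sub-lemma would give $\gamma\in\alpha$ and $\gamma\in\beta$, whence $\gamma\in\alpha\cap\beta=\gamma$, contradicting irreflexivity. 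So $\gamma$ equals $\alpha$ or $\beta$, and applying the sub-lemma to the surviving strict inclusion yields exactly one of $\alpha\in\beta$, $\alpha=\beta$, $\beta\in\alpha$ (mutual exclusivity coming from irreflexivity and transitivity).

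Finally, for well-foundedness I would let $C$ be any nonempty class of ordinals and pick $\alpha\in C$. If $\alpha$ is not already $\prec$-least, then $\alpha\cap C$ is a nonempty subset of the well-ordered set $\alpha$, so it has an $\in$-least element $\mu$; using transitivity of $\alpha$, any $\delta\in C$ with $\delta\prec\mu$ would satisfy $\delta\in\mu\subseteq\alpha$ and hence $\delta\in\alpha\cap C$, contradicting minimality of $\mu$, so trichotomy forces $\mu\preceq\delta$ for all $\delta\in C$. Thus $\mu$ is the least element of $C$, and $\prec$ well-orders $\Ord$. The portion I expect to require the most care is the trichotomy argument, since it alone needs the intersection construction and both directions of the subset-versus-membership sub-lemma; the remaining steps are short consequences of transitivity and Foundation.
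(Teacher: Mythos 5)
Your proof is correct, but note that the paper does not actually prove this lemma: it is stated as imported background, with the proof deferred entirely to Lemma 2.11 of Jech, so there is no internal argument to compare against. What you have written is essentially the standard proof from that reference — first that every element of an ordinal is an ordinal (which is indeed all that \eqref{eq:members-are-ordinals} requires), then the key sub-lemma that a proper subset relation between ordinals implies membership via the $\in$-least element of the difference, then trichotomy via the intersection $\alpha\cap\beta$, and finally well-foundedness by descending into $\alpha\cap C$. All four steps are sound. One minor remark: you do not need the Axiom of Foundation for irreflexivity, since $\alpha\in\alpha$ would already make $\alpha$ an element of $\alpha$ violating the irreflexivity of the given strict well-order $\in$ on $\alpha$ itself; this keeps the argument self-contained relative to Definition \ref{def:ordinal}.
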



\begin{defn}
 The \emph{union set} of a set of sets $A$ is
 \begin{equation}
  \bigcup A := \bigcup_{\zeta \in A} \zeta.
 \end{equation}
\end{defn}


Given an ordinal $\alpha$, the sets
$$\alpha \cup \{\alpha\} \qquad \text{and}\qquad \bigcup \alpha$$
are also ordinals \cite[p.\ 20]{jech2003set}.

\begin{defn}[Successor and Limit Ordinals]
 The \emph{successor ordinal} to $\alpha$ is the set $\alpha+1 := \alpha \cup \{\alpha\} $. If an ordinal $\xi$ not a successor ordinal, it is called a \emph{limit ordinal}.
\end{defn}

Here is an easily proved alternative characterization of this distinction:
%
\begin{equation}
\label{eq:limit_ordinal}
 \alpha = \begin{cases}
  \bigcup \alpha    & \text{if $\alpha$ is a limit ordinal, or}\\
  \bigcup \alpha + 1    & \text{if $\alpha$ is a successor}.
 \end{cases}
\end{equation}

To emphasize the relationship between ordinals and the Continuum Hypothesis, we note that there exist ordinals of every cardinality \cite[p.\ 29]{jech2003set}. The following theorem is of particular importance to this discussion. Define
\begin{equation}\label{eq: omega_1}
    \omega_1 := \big\{\xi\in \Ord\colon \xi\text{ is countable}\big\}.
\end{equation}

\begin{thm}[Page 30 in \cite{jech2003set}]\label{thm:omega1-least-unctounable}
 The set $\omega_1$ is the least uncountable ordinal. In particular:
\[
    |\omega_1| = \aleph_1.
\]
\end{thm}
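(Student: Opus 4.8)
The plan is to verify directly that the collection $\omega_1$ defined in \eqref{eq: omega_1} meets the criteria of Definition \ref{def:ordinal}, that it is uncountable, and that nothing below it is; the cardinality statement will then follow. First I would check that $\omega_1$ is \emph{transitive}: if $\alpha\in\omega_1$ then $\alpha$ is a countable ordinal, and any $\beta\prec\alpha$ satisfies $\beta\subset\alpha$ by Lemma \ref{lem:members-are-ordinals}, so $\beta$ is a countable ordinal and hence $\beta\in\omega_1$. Thus $\alpha=\{\beta\in\Ord\colon\beta\prec\alpha\}\subseteq\omega_1$, giving transitivity, and in particular showing that $\omega_1$ is downward closed under $\prec$. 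Since $\omega_1$ is a subcollection of $\Ord$, it is automatically well-ordered by the membership relation $\in$, again by Lemma \ref{lem:members-are-ordinals}.

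The one genuinely delicate point — and the step I expect to be the main obstacle — is confirming that $\omega_1$ is a \emph{set} rather than a proper class, since only then does Definition \ref{def:ordinal} even apply. Here I would invoke the fact quoted just above that ordinals of every cardinality exist, and fix some uncountable ordinal $\gamma$, noting $\gamma\notin\omega_1$. Because $\omega_1$ is downward closed, trichotomy forces every $\delta\in\omega_1$ to satisfy $\delta\prec\gamma$: the alternative $\delta=\gamma$ is excluded since $\gamma\notin\omega_1$, and $\gamma\prec\delta$ is excluded since downward closure would then place the uncountable $\gamma$ inside $\omega_1$. Hence $\omega_1\subseteq\gamma$, and a subcollection of a set is a set. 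Being a transitive set well-ordered by $\in$, $\omega_1$ is therefore an ordinal.

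Next I would establish uncountability by contradiction. If $\omega_1$ were countable, then $\omega_1$ would itself be a countable ordinal and so, by its own defining property, satisfy $\omega_1\in\omega_1$; but no ordinal is an element of itself, because $\in$ strictly well-orders $\Ord$ by Lemma \ref{lem:members-are-ordinals}. This contradiction shows $\omega_1$ is uncountable. Leastness is then immediate: any ordinal $\alpha\prec\omega_1$ is by definition an element of $\omega_1$ and hence countable, so $\omega_1$ is the least uncountable ordinal.

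Finally, for the ``in particular'' claim $|\omega_1|=\aleph_1$, recall that $\aleph_1$ denotes the least (second-smallest infinite) uncountable cardinality. Since $\omega_1$ is uncountable, $|\omega_1|\ge\aleph_1$. For the reverse inequality I would argue that no uncountable cardinality lies strictly below $|\omega_1|$: if a set $X$ were uncountable with $|X|<|\omega_1|$, then well-ordering $X$ identifies it with an ordinal $\beta$ of the same cardinality, so $\beta$ is uncountable and hence $\omega_1\preceq\beta$ by leastness, whence $|\omega_1|\le|\beta|=|X|$, a contradiction. Therefore $|\omega_1|$ is the least uncountable cardinality, i.e.\ $|\omega_1|=\aleph_1$.
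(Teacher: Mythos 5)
Your argument is correct. Note, though, that the paper does not prove this statement at all: it is quoted verbatim from Jech (page 30), so there is no in-paper proof to compare against. What you have written is a sound, self-contained derivation from exactly the facts the paper does quote --- Lemma \ref{lem:members-are-ordinals} and the remark that ordinals of every cardinality exist. You correctly isolate the one genuinely nontrivial issue, namely that $\omega_1$ as defined in \eqref{eq: omega_1} is a set rather than a proper class, and your resolution (bound it above by any uncountable ordinal $\gamma$ using downward closure and trichotomy, then apply separation) is the standard one. The uncountability step via $\omega_1\in\omega_1$ and irreflexivity of $\prec$ is the classical Burali-Forti-style argument, and the final cardinality step correctly leans on the well-ordering theorem together with the representation of well-orders by ordinals (which the paper mentions in its remark citing Theorem 2.12 of Jech). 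The only dependence worth flagging is that this last step uses the axiom of choice, which is unproblematic since the paper works in ZFC.
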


\begin{prop}
\label{prop:omega1-not-successor}
 $\omega_1$ is a limit ordinal.
\end{prop}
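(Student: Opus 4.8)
The plan is to argue by contradiction, leveraging the one nontrivial fact at our disposal, namely that $\omega_1$ is uncountable (Theorem \ref{thm:omega1-least-unctounable}). Suppose, contrary to the claim, that $\omega_1$ is a successor ordinal, so that $\omega_1 = \alpha + 1 = \alpha \cup \{\alpha\}$ for some ordinal $\alpha$.

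From this assumption I would extract a contradiction in two short steps. First, $\alpha \prec \alpha + 1 = \omega_1$, so $\alpha \in \omega_1$, and by the defining property \eqref{eq: omega_1} of $\omega_1$ this forces $\alpha$ to be a countable ordinal. Second, $\omega_1 = \alpha \cup \{\alpha\}$ is obtained from the countable set $\alpha$ by adjoining a single element, hence is itself countable. This contradicts the uncountability of $\omega_1$, so $\omega_1$ cannot be a successor and is therefore a limit ordinal.

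The only content beyond bookkeeping is the implication ``$\alpha$ countable $\Rightarrow \alpha \cup \{\alpha\}$ countable,'' which is the (mild) crux: if $\alpha$ is finite the union is finite, and if $\alpha$ is countably infinite then adjoining one point leaves the cardinality at $\aleph_0$. I expect no genuine obstacle here, as this is elementary cardinal arithmetic.

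If a more direct argument is preferred, an equivalent route is to show that $\omega_1$ is closed under successors — that is, $\xi \in \omega_1$ implies $\xi + 1 \in \omega_1$, by the same ``countable plus one is countable'' fact together with $\omega_1$ being the \emph{least} uncountable ordinal — and then invoke the characterization \eqref{eq:limit_ordinal}. Indeed, a successor ordinal $\beta + 1 = \beta \cup \{\beta\}$ has $\beta = \bigcup(\beta+1)$ as its $\prec$-greatest element, whereas closure under successors shows that $\omega_1$ has no greatest element, since every $\xi \in \omega_1$ is exceeded by $\xi + 1 \in \omega_1$. Either framing reduces to the same elementary computation.
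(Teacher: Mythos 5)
Your argument is correct and is essentially the same as the paper's: assume $\omega_1 = \alpha+1$, note $\alpha\in\omega_1$ is countable by \eqref{eq: omega_1}, and conclude that $\omega_1 = \alpha\cup\{\alpha\}$ would then be countable, contradicting Theorem \ref{thm:omega1-least-unctounable}. The paper's proof is just a one-sentence compression of exactly this reasoning, so no further comparison is needed.
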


\begin{proof}
 If $\omega_1$ were a successor ordinal, then because each element of $\omega_1$ is countable, it would be successor to a countable ordinal, and thus it would be countable as well.
\end{proof}

We also require the following language.
\begin{defn}
  Given a set $\Omega$, a set $B\subseteq\Omega$ is \emph{co-countable} with respect to $\Omega$ if $\Omega\setminus B$ is countable.
\end{defn}

\begin{exmp}
\begin{itemize}
    \item $\R - \{1\}$ and $\R - \mathbb{Q}$ are co-countable subsets of $\R$.
    \item No countable subset of $\R$ is co-countable in $\R$.
    \item The set $(0, 1)$ is neither a countable, nor a co-countable subset of $\R$
\end{itemize}
\end{exmp}

\section{Proofs of main results}
\label{sec:results}

\begin{proof}[Proof of Theorem \ref{thm:general}]
 Following the notation of the theorem statement, we take:
 \begin{itemize}
  \item   $\Omega = \omega_1$;
  \item $\mathcal{F} = \{ \text{Countable and co-countable subsets of $\omega_1$}\}$,
  \item $\displaystyle \mathbf{P}( A ) = \begin{cases}
        1 &\text{if $A$ is co-countable,} \\
        0 &\text{if $A$ is countable;}
\end{cases}$
  \item $\cO = \omega_1$ under the usual ordering of the ordinals (via the subset relation);
  \item $A_\xi = \xi$ for every $\xi\in\omega_1$.
 \end{itemize}
 
 To clarify, we are using $\omega_1$ in three ways: as our sample space, as our totally ordered index set $\cO$, and as our increasing $\cO$-family of $\cF$-measurable sets.
 
 Indeed, by the definition of $\omega_1$ in \eqref{eq: omega_1}, all of its elements are countable. And by the transitive set property of the ordinals, elements of $\omega_1$ are also subsets of $\omega_1$; thus, $\omega_1\subset\cF$, and
 $$\sup_{\xi\in\omega_1}\mathbf{P}(\xi) = 0.$$
 Finally, by Proposition \ref{prop:omega1-not-successor} and \eqref{eq:limit_ordinal},
 \[
   \mathbf{P} \left( \bigcup_{\xi \in \omega_1} \xi \right) = \mathbf{P} \left( \omega_1 \right) = 1.\qedhere
 \]
\end{proof}
\medskip

\begin{proof}[Proof of Theorem \ref{thm:real}]
We assume the continuum hypothesis, \eqref{eq:CH}. This implies that the cardinality of $\omega_1$ equals that of $\R$, by Theorem \ref{thm:omega1-least-unctounable}. Under this assumption, there exists a bijection $f: (0, 1) \to \omega_1$. To avoid confusion between inverse and preimage, define:
\[
  F(\beta) = \left\{f ^ {- 1} ( \alpha ) : \alpha \in \beta \right \} \quad \text{ where } f ^ {- 1} \text{ is the inverse map of $f$.} 
\]
Consider the probability space $\big( ( 0, 1), \mathcal{B}( (0, 1) ), \Leb  \big) $. Define  
$$
\mathcal{S} = \Big\{ F(\xi )\ \Big|\ \xi \in \omega_1 \Big\}.
$$ 
Suppose $A, B \in \mathcal{S}$. By definition, there exist $\alpha, \beta \in \omega_1$, such that
$$ A = F(\alpha) = \big\{ f ^{-1}( \phi ) \colon \phi \in \alpha \big\}  \text{\quad and\quad}
B = F(\beta) = \big\{ f ^{-1} ( \phi ) \colon \phi \in \beta \big\}.$$
By Lemma \ref{lem:members-are-ordinals}, either $\alpha \subseteq \beta$ or $\beta \subseteq \alpha$. Thus, either $A \subseteq B$ or $B \subseteq A$. Moreover, each element in  $\mathcal{S}$ is countable, since each $\xi \in \omega_1$ is countable. As a result:
\[
\sup_{A \in \mathcal{S}} \Leb( A ) = 0.
\]
Then
\begin{align*}
    \bigcup_{A \in \mathcal{S}} A 
    &= \bigcup_{\xi \in \omega_1} F(\xi) \\
    &= \bigcup_{\xi \in \omega_1} \left\{ f^{-1}(\alpha) : \alpha \prec \xi \right\} \\
    &= \bigcup_{\xi \in \omega_1} \left\{ t \in (0, 1) : f(t) \prec \xi \right\} \\
    &= \left\{ t \in (0, 1) : \exists \xi \in \omega_1 \text{ such that } f(t) \prec \xi \right\} \\
    &= (0, 1) \qquad \text{by Proposition \ref{prop:omega1-not-successor}}.
\end{align*}
Thus, 
\[
\Leb\left(\bigcup_{A \in \mathcal{S}} A\right) = 1 \neq 0 = \sup_{A \in \mathcal{S}} \Leb( S ). \qedhere
\]
\end{proof}
\medskip

\begin{proof}[Proof of Corollary \ref{cor:set_masses:general}]
Fix:
\begin{itemize}
 \item a totally ordered index set $\left( \mathcal{O}, \preceq \right)$,
 \item a probability space $\left( \Omega, \mathcal{F}, \mathbf{P} \right)$,
 \item and an increasing $\mathcal{O}$ family $\left( A_t \right) _{t \in \mathcal{O}}$ of $\mathcal{F}$ sets
\end{itemize}
that realize the claim of Theorem \ref{thm:general}, with $\sup_{t\in\mathcal{O}}\Pr(A_t) = 0$. Let $\left( m_t \right) _{t \in \mathcal{O}}$ be a bounded, non-decreasing family of non-negative real numbers, and fix $M \ge  \sup_{t \in \mathcal{O}}m_t$.

Our strategy will be to define a space $(E,\mathcal{E},\mu)$ as a disjoint union $E = \Omega\sqcup \R_+$ and define $\mu$ as the sum of a rescaled $\Pr$ on the $\Omega$ component, plus Lebesgue measure on the real component. To formalize this, we define:
\begin{itemize}
    \item  $E = \left( \Omega \times \left\{ 1 \right\}  \right) \cup \left( \R_+ \times \left\{ 2 \right\}  \right) $,
    \item $\mathcal{E} = \left\{ \left( S \times \left\{ 1 \right\}  \right) \cup \left( T \times \left\{ 2 \right\}  \right): S \in \mathcal{F}, T \in \mathcal{B}\left( \R_+ \right)   \right\} $, and
    \item $\mu: \mathcal{E} \to \R_+$ given by
    \begin{align*}
     \mu\left( S \times \left\{ 1 \right\} \cup T \times \left\{ 2 \right\}  \right) &= \frac{s}{p}\mathbf{P}\left( S \right) + \Leb \left( T \right)\text{, where}\\
     s &:= M - \sup_{t\in\cO}m_t,\\
     p &:= \Pr\left(\bigcup_{t\in\cO} A_t\right).
    \end{align*}
\end{itemize}

Let
\[
  B_t = \Big( A_t\times\{1\}\Big) \cup \Big( (0,m_t)\times\{2\}\Big).
\]
Then
\[
  \mu(B_t) = \Leb( 0, m_t) + \frac{s}{p} \mathbf{P}\left( A_t \right)  = m_t,
\] 
and
\begin{align*}
 \mu\left( \bigcup_{t \in \mathcal{O}} B_t \right) &=  \mu \left( \bigcup_{t \in \mathcal{O}}( 0, m_t) \times \{ 1 \}  \cup \bigcup_{t \in \mathcal{O}} A_t\times \{ 2 \}   \right) \\
 &=  \Leb\!\left( \bigcup_{t \in \mathcal{O}}( 0, m_t)  \right) + \frac{s}{p} \mathbf{P}\left( \bigcup_{t \in \mathcal{O}} A_t \right) \\
 &=  \sup_{t \in \mathcal{O}} m_t + \frac{s}{p}p \\
 &= M. \qedhere
\end{align*}
\end{proof}
\medskip

\begin{proof}[Proof of Corollary \ref{cor:set_masses:real}]
Fix $( m_t) _{t \in \mathcal{O}}$ a non-decreasing family of non-negative real numbers, and fix $M \ge \sup_{t \in \mathcal{O}} m_t$. As per the proof of theorem 3, there exists increasing $\mathcal{O}$-family  $( A_t ) _{t \in \mathcal{O}}$ of Borel subsets of $( 0, 1) $, such that
\[
1 = \Leb\!\left( \bigcup_{t \in \left( 0, 1 \right) } A_t \right) > \sup_{t \in \left( 0, 1 \right) } Leb\left( A_t \right) = 0
.\] 
Let $s = M - \sup_{t \in \mathcal{O}} m_t$. Define
\begin{gather*}
 D_t := s A_t = \{sa\colon a\in A_t\}\text{, so}\\
 s = \Leb\!\left( \bigcup_{t \in \left( 0, 1 \right) }  D_t \right) > \sup_{t \in \mathcal{O}} \Leb \left( D_t  \right)  = 0.
\end{gather*}
Now, let
\begin{gather*}
  B_t := D_t \cup \left( s, m_t + s \right)\text{ so that}\\
  \Leb( B_t ) = \Leb\big( ( s, m_t + s ) \big) + \Leb( D_t ) = m_t + 0.
\end{gather*}
Then
\begin{align*}
    \Leb\!\left( \bigcup_{t \in \left( 0, 1 \right) } B_t  \right) &=  \Leb\!\left( \bigcup_{t \in \left( 0, 1 \right) } A_t \right) + \Leb\!\left( \bigcup_{t \in \left( 0, 1 \right) } \left( s, m_t + s \right) \right) \\
    &= s + \sup_{t \in \mathcal{O}} m_t \\
    &= M. \qedhere
\end{align*}
\end{proof}
\bibliographystyle{plain}
\bibliography{refs(5).bib}

\end{document}